\numberwithin{equation}{section}
\newtheorem{Theorem}{Theorem}[section]
\newtheorem{Lemma}[Theorem]{Lemma}
\theoremstyle{definition}
\newtheorem*{Example}{Example}
\theoremstyle{remark}
\newtheorem*{remark}{Remark}
\newcommand{\A}{\mathcal{A}}
\newcommand{\B}{\mathcal{B}}
\newcommand{\C}{\mathcal{C}}
\newcommand{\D}{\mathcal{D}}
\newcommand{\Title}[1]{\begin{center}\Large{Title???}
\normalsize  \end{center}}
\title{Generalized Andrews-Gordon Identities}
\author[Hannah Larson]{Hannah Larson}
\address{5015 Donald St., Eugene, OR, 97405}
\email{hannahlarson@college.harvard.edu}
\begin{document}

\maketitle

\begin{abstract}
In a recent paper, Griffin, Ono and Warnaar present a framework for Rogers-Ramanujan type identities using Hall-Littlewood polynomials to arrive at expressions of the form 
\[\sum_{\lambda : \lambda_1 \leq m} q^{a|\lambda|}P_{2\lambda}(1,q,q^2,\ldots ; q^{n}) = \text{``Infinite product modular function"}\]
for $a = 1,2$ and any positive integers $m$ and $n$.
A recent paper of Rains and Warnaar presents further Rogers-Ramanujan type identities involving sums of terms $q^{|\lambda|/2}P_{\lambda}(1,q,q^2,\ldots;q^n)$.
%of the form
%\[\sum_{\lambda:\lambda_1\leq m} q^{|\lambda|/2} P_{\lambda}(1,q,q^2,\ldots ;q^n) = \text{``Infinite product modular function"}.\]
It is natural to attempt to reformulate these various identities to match the well-known Andrews-Gordon identities they generalize. Here, we find combinatorial formulas to replace the Hall-Littlewood polynomials and arrive at such expressions.
\end{abstract}

\section{Introduction}
In \cite{GOW}, the authors construct a general framework describing four doubly-infinite families of Rogers-Ramanujan type identities. In this context, the famous Rogers-Ramanujan identities \cite{RR}
\begin{equation}
\sum_{n=0}^\infty \frac{q^{n^2}}{(1-q)\cdots (1-q^n)} = \prod_{n=0}^\infty \frac{1}{(1-q^{5n+1})(1-q^{5n+4})}
\end{equation}
and
\begin{equation}
\sum_{n=0}^\infty \frac{q^{n^2+n}}{(1-q)\cdots (1-q^n)} = \prod_{n=0}^\infty \frac{1}{(1-q^{5n+2})(1-q^{5n+3})}
\end{equation}
are presented as a special case of their Theorem 1.1 through setting the parameters $(m,n) = (1,1)$. Fixing only $n=1$ gives rise to the $i=1$ and $i=m+1$ instances of the well-known Andrews-Gordon identities, \cite{AG}
\begin{equation} \label{AG}
\sum_{r_1 \geq \cdots \geq r_m \geq 0} \frac{q^{r_1^2+\ldots+r_m^2+r_i+\ldots+r_m}}{(q)_{r_1-r_2}\cdots (q)_{r_{m-1}-r_m}(q)_{r_m}} = \frac{(q^{2m+3};q^{2m+3})_{\infty}}{(q)_\infty} \cdot \theta(q^i; q^{2m+3}),
\end{equation}
where we use the standard notation
\[(a)_k = (a;q)_k := \begin{dcases} (1-a)(1-aq) \cdots (1-aq^{k-1}) & \text{if $k \geq 0$} \\
\prod_{j=0}^\infty (1-aq^j) & \text{if $k=\infty$}
 \end{dcases} \]
and
\[ \theta(a;q) := (a;q)_\infty(q/a;q)_\infty.\]
For convenience, we also set
\[ \theta(a_1,\ldots, a_n; q) := \theta(a_1;q)\cdots \theta(a_n;q).\]

The general identities in \cite{GOW} are presented as sums over partitions $\lambda = (\lambda_1, \lambda_2, \ldots )$ of associated Hall-Littlewood polynomials in the form
\begin{equation} \label{even}
\sum_{\lambda : \lambda_1 \leq m} q^{a|\lambda|}P_{2\lambda}(1,q,q^2,\ldots ; q^{n}) = \text{``Infinite product modular function"},
\end{equation}
with $a=1,2$. Meanwhile, the identities in \cite{RW} take the form
\begin{equation}\label{rwform}
\sum_{\lambda : \lambda_1 \leq m} Cq^{|\lambda|/2}P_{\lambda}(1,q,q^2,\ldots ; q^{n}) = \text{``Infinite product modular function"},
\end{equation}
where $C$ is a particular product of Pochhammer symbols.
Given this framework, it is natural to ask to what extent we can reformulate these identities to look like the Andrews-Gordon identities stated above.
Here, we recast the left-hand side of these identities, without reference to partitions or Hall-Littlewood polynomials, to arrive at such explicit expressions. 

The sums appearing on the left-hand sides of identities corresponding to those in \cite{RW} will range over various sets of decreasing integers $s_1^{(j)} \geq \cdots \geq s_m^{(j)}$ related by $s_i^{(j)} \geq s_i^{(j+1)}$ for $0 \leq j \leq n-1$. We will write $s^{(j)}:=s_1^{(j)} +\ldots + s_m^{(j)}$ and use the convention that $s_i^{(n)}=0$ for all $i$ and $s_{m+1}^{(j)} = 0$ for all $j$. For such a collection of integers, we define
\begin{align}  \label{Adef}
\A_{m,n}(s_*) &:= \A_{m,n}(s_*^{(0)}, s_*^{(1)}, \ldots, s_*^{(n)}) \\
 &=
- \frac{n}{2}s^{(0)}+s^{(1)} + \ldots + s^{(n-1)}+\frac{n}{2}\sum_{i=1}^m\sum_{a=1}^n (s_i^{(a-1)}-s_i^{(a)})^2 \notag
\end{align}
and
\begin{align} \label{Bdef}
\B_{m,n}(s_*) &:= \B_{m,n}(s_*^{(0)}, s_*^{(1)},\ldots, s_*^{(n)};q) \\
&= \prod_{i=1}^m \prod_{a=1}^n \frac{(q)_{s_i^{(a-1)}-s_{i+1}^{(a)}}}{(q)_{s_i^{(a-1)}-s_i^{(a)}}(q)_{s_i^{(a)}-s_{i+1}^{(a)}}} \prod_{j=1}^m \frac{1}{(q)_{s_{i}^{(0)}-s_{i+1}^{(0)}}} . \notag
\end{align}
When the identities include only Hall-Littlewood polynomials of even partitions as in \eqref{even}, it will be more convenient to write our sums over integers $r_1 \geq \cdots \geq r_m \geq 0$ and collections $s_1^{(j)} \geq \ldots \geq s_{2m}^{(j)}$ for $1 \leq j \leq n-1$ satisfying $s_i^{(j)}\geq s_i^{(j+1)}$ and $r_{\lceil i/2 \rceil} \geq s_i^{(1)}$. 
In this case, we define
\begin{equation} \label{Cdef}
\C_{m,n}(r_*,s_*) := \C_{m,n}(r_*, s_*^{(1)},\ldots, s_*^{(n)}) = \A_{2m,n}(s_i^{(0)}=r_{\lceil i/2 \rceil}, s_*^{(1)},\ldots, s_*^{(n)})
\end{equation}
and
\begin{equation} \label{Ddef}
\D_{m,n}(r_*,s_*;q) := \D_{m,n}(r_*,s_*^{(1)},\ldots, s_*^{(n)}; q) := \B_{2m,n}(s_i^{(0)}=r_{\lceil i/2 \rceil}, s_*^{(1)}, \ldots, s_*^{(n)}).
 \end{equation}
 
 The following is a reformulation of Theorem 1.1 of \cite{GOW} which more closely resembles the Andrews-Gordon identities as stated in \eqref{AG}.
\begin{Theorem} \label{th1}
For positive integers $m$ and $n$, let $\kappa := 2m+2n+1$ and $n' := 2n-1$. Then we have
\begin{align*}
\sum_{r_*, s_*} \D_{m, n'}(r_*,s_*;q^{n'})q^{\C_{m, n'}(r_*,s_*)+r} &= \frac{(q^\kappa;q^\kappa)_\infty^n}{(q)_\infty^n} \prod_{i=1}^n\theta(q^{i+m};q^\kappa) \prod_{1 \leq i < j \leq n} \theta(q^{j-i};q^{i+j-1};q^\kappa) \\
&= \frac{(q^\kappa;q^\kappa)_\infty^m}{(q)_\infty^m} \prod_{i=1}^m \theta(q^{i+1};q^\kappa) \prod_{1 \leq i < j \leq m}\theta(q^{j-i},q^{i+j+1};q^\kappa).
\end{align*}
and
\begin{align*}
\sum_{r_*,s*} \D_{m,n'}(r_*,s_*;q^{n'})q^{\C_{m,n'}(r_*,s_*)+2r} &= \frac{(q^\kappa;q^\kappa)_\infty^n}{(q)^n_\infty} \prod_{i=1}^n \theta(q^i;q^\kappa) \prod_{1 \leq i < j \leq n} \theta(q^{j-i},q^{i+j};q^\kappa) \\
&= \frac{(q^\kappa;q^\kappa)_\infty^m}{(q)_\infty^m} \prod_{i=1}^m\theta(q^i;q^\kappa) \prod_{1\leq i < j \leq m}\theta(q^{j-i},q^{i+j};q^\kappa),
\end{align*}
where the sums range over sets of indices $r_1 \geq \cdots \geq r_m\geq 0$ and $s_1^{(j)} \geq \cdots \geq s_{2m}^{(j)} \geq 0$ for $1 \leq j \leq n'-1$ satisfying $s_i^{(j)} \geq s_i^{(j+1)}$ and $r_{\lceil i/2 \rceil} \geq s_i^{(1)}$.
\end{Theorem}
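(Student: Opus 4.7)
The plan is to start from Theorem 1.1 of \cite{GOW}, which equates the infinite-product modular function on the right-hand side of our Theorem \ref{th1} with the Hall-Littlewood sum
\[
\sum_{\lambda : \lambda_1 \leq m} q^{a|\lambda|}\, P_{2\lambda}(1, q, q^2, \ldots; q^n), \qquad a = 1, 2.
\]
It therefore suffices to prove the purely combinatorial identity
\[
\sum_{\lambda : \lambda_1 \leq m} q^{a|\lambda|}\, P_{2\lambda}(1, q, q^2, \ldots; q^n) \;=\; \sum_{r_*, s_*} \D_{m, n'}(r_*, s_*; q^{n'})\, q^{\C_{m, n'}(r_*, s_*) + a r}.
\]
Reindexing the left-hand sum by the conjugate partition identifies $\lambda' = (r_1, \ldots, r_m)$, so $|\lambda| = r$, the constraint $\lambda_1 \leq m$ becomes $\ell(\lambda') \leq m$, and the columns of $2\lambda$ form the sequence $(r_1, r_1, r_2, r_2, \ldots, r_m, r_m)$, matching the boundary value $s_i^{(0)} = r_{\lceil i/2 \rceil}$ built into $\C$ and $\D$. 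For each fixed $\lambda$ the task thus reduces to
\[
P_{2\lambda}(1, q, q^2, \ldots; q^n) \;=\; \sum_{s_*} \D_{m, n'}(r_*, s_*; q^{n'})\, q^{\C_{m, n'}(r_*, s_*)},
\]
the outside factor $q^{a|\lambda|} = q^{a r}$ accounting for the $+ a r$ term on the right.

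My main tool would be the tableau/branching expansion of the Hall-Littlewood polynomial, $P_\mu(y_1, y_2, \ldots; t) = \sum_T \psi_T(t)\, y^T$, where $T$ ranges over semistandard Young tableaux of shape $\mu$ and $\psi_T(t)$ is the standard product of horizontal-strip weights. Setting $\mu = 2\lambda$, $t = q^n$, and $y_k = q^{k-1}$ rewrites the left-hand side as a sum over decreasing chains
\[
s^{(0)} = (2\lambda)' \;\supseteq\; s^{(1)} \;\supseteq\; \cdots \;\supseteq\; s^{(N)} = \emptyset
\]
whose consecutive pairs are horizontal strips, weighted at level $k$ by a monomial $q^{(k-1)|s^{(k-1)}/s^{(k)}|}$ and the Hall-Littlewood factor $\psi_{s^{(k-1)}/s^{(k)}}(q^n)$. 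The key structural feature is that under the specialization $t = q^n$, chains of length greater than $n' = 2n - 1$ collapse (either the $\psi$-factors force them to reduce to an already-counted chain of length $\leq n'$, or they contribute no new weight), truncating the sum to the finite family of flags indexing the right-hand side of the theorem.

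Granting this truncation, matching the resulting expression to the right-hand side is a direct reparametrization. The monomial contributions telescope to give $q^{s^{(1)} + \cdots + s^{(n'-1)}}$, producing the linear part of $\C_{m, n'}$. Each factor $\psi_{s^{(k-1)}/s^{(k)}}(q^n)$, written in terms of conjugate partitions, is a ratio of $q^{n'}$-Pochhammer symbols of precisely the form appearing in $\B_{2m, n'}$, so that the product across levels assembles into $\D_{m, n'}(r_*, s_*; q^{n'})$. Finally, the quadratic part $\frac{n}{2} \sum_{i, a}(s_i^{(a-1)} - s_i^{(a)})^2$ together with the correction $-\frac{n}{2}\, s^{(0)}$ in $\C_{m, n'}$ emerges from converting each $(q^n; q^n)$-Pochhammer into the base-$q^{n'}$ form used in $\B$: since $2n = n' + 1$, this base change inserts the half-integer quadratic exponents characteristic of $\A_{2m, n'}$.

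The principal obstacle is this combined truncation and base-change step: one must simultaneously verify that no chain exceeding length $n'$ contributes after specializing $t = q^n$, and that the subsequent repackaging of $(q^n; q^n)$-Pochhammers in base $q^{n'}$ produces the precise quadratic correction built into $\A_{2m, n'}$, rather than some nearby expression. Once these identifications are in place, Theorem \ref{th1} follows directly from Theorem 1.1 of \cite{GOW}, with the two equivalent product forms on the right arising from the $(m, n)$-symmetry of the product formula in \cite{GOW}.
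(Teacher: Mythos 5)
Your global reduction is exactly the paper's: invoke Theorem 1.1 of \cite{GOW}, reindex the partition sum by conjugates so that $|\lambda|=r$ and the boundary flag is $s_i^{(0)}=r_{\lceil i/2\rceil}$, and then everything hinges on the single combinatorial identity $P_{2\lambda}(1,q,q^2,\ldots;q^{n'})=\sum_{s_*}\D_{m,n'}(r_*,s_*;q^{n'})q^{\C_{m,n'}(r_*,s_*)}$ (the paper's Lemma \ref{evenlm}). The gap is in how you propose to prove that identity. First, a parameter slip that matters: you state the needed identity and run the tableau expansion with Hall--Littlewood parameter $t=q^{n}$, whereas Theorem 1.1 of \cite{GOW} and the $\D_{m,n'}(\,\cdot\,;q^{n'})$ on your right-hand side require $t=q^{n'}=q^{2n-1}$; for $n\geq 2$ these are different polynomials, and no ``base change'' converts $(q^{n};q^{n})$-Pochhammer symbols into $(q^{n'};q^{n'})$-ones (your appeal to $2n=n'+1$ does not produce such an identity). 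Second, the truncation you rely on is not a real phenomenon: in the expansion $P_{2\lambda}(1,q,q^2,\ldots;t)=\sum_T\psi_T(t)x^T$ over semistandard tableaux with entries in $\{1,2,3,\ldots\}$, tableaux with arbitrarily large entries contribute for any specialization $t=q^{N}$ (already for $P_{(2)}(1,q,q^2,\ldots;t)$), so chains of length greater than $n'$ do not collapse term by term, and no $\psi$-factor vanishing forces them to.

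The mechanism that actually produces flags of length exactly $n'$ is different: one passes to the modified Hall--Littlewood polynomials via the homomorphism $\phi_{q^{n'}}$, giving $P_\lambda(1,q,q^2,\ldots;q^{n'})=P'_\lambda(1,q,\ldots,q^{n'-1};q^{n'})$ as in \eqref{PtoP'}, and then applies the Kirillov--Warnaar--Zudilin formula \eqref{Q'} for $Q'_\lambda(x_1,\ldots,x_{n'};q^{n'})$ in exactly $n'$ variables, dividing by $\prod_j(q^{n'};q^{n'})_{\lambda'_j-\lambda'_{j+1}}$ as in \eqref{QtoP}. In that formula everything is already in base $q^{n'}$: the Pochhammer ratios are precisely those of $\B_{2m,n'}$, and the quadratic part $\tfrac{n'}{2}\sum_{i,a}(s_i^{(a-1)}-s_i^{(a)})^2$ together with the linear correction $-\tfrac{n'}{2}s^{(0)}+s^{(1)}+\cdots+s^{(n'-1)}$ comes from the factors $q^{n'\binom{s_i^{(a-1)}-s_i^{(a)}}{2}}$ combined with the principal specialization $x_a=q^{a-1}$ (this is the computation in the paper's Lemma \ref{key}), not from any repackaging of Pochhammer bases. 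So the step you flag as the ``principal obstacle'' is indeed the crux, but the route you sketch for it (truncation of the infinite-variable tableau sum plus a $q^{n}\to q^{n'}$ base change) would not go through; you need the modified Hall--Littlewood detour or an equivalent fermionic formula.
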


\begin{remark}
As promised, the Andrews-Gordon identities are easily recognized through setting $n=1$. Since $n'=1$, there are no $s$ indices in the sum, giving
\[
\C_{m,1}(r_*)= -r + \frac{1}{2}\sum_{i=1}^{2m} r_{\lceil i/2 \rceil}^2  = - (r_1 + \ldots + r_m) + r_1^2 + \ldots + r_m^2 
\]
and
\[
\D_{m,1}(r_*;q) = \prod_{j=1}^m \frac{1}{(q)_{r_i-r_{i+1}}} =\frac{1}{(q)_{r_1-r_2}\cdots (q)_{r_{m-1}-r_m}(q)_{r_m}}.
\]
Plugging this into the two identities in the theorem above results directly in \eqref{AG} with $i = 1$ and $i = m+1$ respectively.
\end{remark}

One can obtain similar reformulations of Theorems 1.2 and 1.3 of \cite{GOW}, which we label respectively here.

\begin{Theorem}
For positive integers $m$ and $n$, let $\kappa := 2m+2n+2$ and $n':=2n$. Then we have
\begin{align*}
&\sum_{r_*, s_*} \D_{m, n'}(r_*,s_*;q^{n'})q^{\C_{m, n'}(r_*,s_*)+r} \\
&\qquad \qquad = \frac{(q^2;q^2)_\infty (q^{\kappa/2};q^{\kappa/2})_\infty (q^\kappa;q^\kappa)_\infty^{n-1}}{(q)_\infty^{n+1}} \prod_{i=1}^n\theta(q^i;q^{\kappa/2}) \prod_{1\leq i < j \leq n}\theta(q^{j-i},q^{i+j};q^\kappa) \\
&\qquad \qquad = \frac{(q^\kappa;q^\kappa)_\infty^m}{(q)_\infty^m} \prod_{i=1}^m \theta(q^{i+1};q^\kappa) \prod_{1\leq i < j \leq m}\theta(q^{j-i},q^{i+j+1};q^\kappa).
\end{align*}
\end{Theorem}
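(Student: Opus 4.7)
The plan is to follow the strategy of Theorem~\ref{th1} essentially verbatim, replacing the appeal to Theorem~1.1 of~\cite{GOW} by an appeal to Theorem~1.2 of~\cite{GOW}. Recall that Theorem~1.2 of~\cite{GOW} asserts that
\[ \sum_{\lambda : \lambda_1 \leq m} q^{|\lambda|}\, P_{2\lambda}(1,q,q^2,\ldots;q^{n'}) \]
equals the infinite product appearing on the right-hand side of the statement above. Substituting the combinatorial expansion
\[ P_{2\lambda}(1,q,q^2,\ldots;q^{n'}) \;=\; \sum_{s_*} \D_{m,n'}(r_*,s_*;q^{n'})\, q^{\C_{m,n'}(r_*,s_*)} \]
(with $r_i := \lambda_i$ and the auxiliary sum ranging over the interlacing arrays described in Theorem~\ref{th1}), together with $|\lambda| = r_1 + \cdots + r_m = r$, converts this into
\[ \sum_{r_*,s_*} \D_{m,n'}(r_*,s_*;q^{n'})\, q^{\C_{m,n'}(r_*,s_*)+r}, \]
which is precisely the left-hand side of the claimed identity, establishing the first equality.

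The second equality, between the $n$-indexed product (involving the extra factors $(q^{\kappa/2};q^{\kappa/2})_\infty$ and $\theta(q^i;q^{\kappa/2})$) and the $m$-indexed product, involves no summation. It is a purely algebraic theta-function rearrangement, and is the same manipulation that~\cite{GOW} carry out to obtain their two dual right-hand sides. I would simply cite or replicate it.

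The main technical obstacle is not in this theorem itself but in the combinatorial expansion of $P_{2\lambda}$ invoked above --- this is the key input that would be developed in an earlier section of the paper in order to prove Theorem~\ref{th1}. Specifically, one must check that the quadratic form plus the linear correction in~\eqref{Adef}, taken under the convention $s_i^{(0)} = r_{\lceil i/2 \rceil}$ built into~\eqref{Cdef}, reproduces the exponent of $q$ arising from the principal specialization of $P_{2\lambda}$, and that the Pochhammer quotients in~\eqref{Bdef} record the correct product of branching coefficients along the interlacing chain. Once this expansion is in hand, both Theorem~\ref{th1} and the present theorem follow by the same substitution; the only difference lies in the infinite product on the right-hand side, which is supplied by~\cite{GOW}.
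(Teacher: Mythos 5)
Your overall strategy is exactly the paper's: expand $P_{2\lambda}(1,q,q^2,\ldots;q^{n'})$ via the combinatorial lemma (Lemma \ref{evenlm}, itself a consequence of Lemma \ref{key} and the formula \eqref{Q'} for $Q'_\lambda$), sum over partitions with $\lambda_1\leq m$, and invoke Theorem 1.2 of \cite{GOW} for the product sides; both product forms are supplied by that theorem, so no separate theta-function manipulation needs to be carried out here, and deferring the proof of the expansion itself is fine since that is precisely the content of the earlier lemmas. The one substantive slip is your identification $r_i:=\lambda_i$. In Lemma \ref{evenlm} and in \eqref{Cdef}--\eqref{Ddef} the $r_i$ are the parts of the \emph{conjugate} partition, $r_i=\lambda_i'$, because \eqref{Q'} applied to $2\lambda$ is a sum over chains of partitions with top $\mu^{(0)}=(2\lambda)'$, whose parts are $(2\lambda)'_i=\lambda'_{\lceil i/2\rceil}=r_{\lceil i/2\rceil}$; moreover the bijection between $\{\lambda:\lambda_1\leq m\}$ and tuples $r_1\geq\cdots\geq r_m\geq 0$ is exactly conjugation. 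With $r_i=\lambda_i$ a partition with $\lambda_1\leq m$ can have more than $m$ parts, so $(r_1,\ldots,r_m)$ neither determines $\lambda$ nor makes the stated expansion of $P_{2\lambda}$ true for a fixed $\lambda$. Since $|\lambda|=|\lambda'|=r$, the exponent bookkeeping is unaffected once $\lambda_i$ is replaced by $\lambda_i'$, and with that correction your argument coincides with the paper's proof.
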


\begin{Theorem} \label{th3}
For positive integers $m$ and $n$ with $n \geq 2$, let $\kappa := 2m+2n$ and $n':=2n$. Then we have
\begin{align*}
\sum_{r_*, s_*} \D_{m, n'}(r_*,s_*;q^{n'})q^{\C_{m, n'}(r_*,s_*)+2r}
&=\frac{(q^\kappa;q^\kappa)_\infty^n}{(q^2;q^2)_\infty (q)_\infty^{n-1}} \prod_{1\leq i < j \leq n} \theta(q^{j-i},q^{i+j-1};q^\kappa) \\
&= \frac{(q^\kappa;q^\kappa)_\infty^m}{(q)_\infty^m} \prod_{i=1}^m \theta(q^i;q^\kappa)\prod_{1\leq i < j \leq m}\theta(q^{j-i},q^{i+j};q^\kappa).
\end{align*}
\end{Theorem}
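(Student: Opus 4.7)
The plan is to reduce Theorem \ref{th3} to Theorem 1.3 of \cite{GOW}, which already establishes that
\[
\sum_{\lambda : \lambda_1 \leq m} q^{2|\lambda|} P_{2\lambda}(1,q,q^2,\ldots;q^{2n})
\]
equals the infinite-product expression on the right-hand side of Theorem \ref{th3}, and in particular furnishes the equivalence of the two product forms. The work that remains is the purely formal (no-infinite-product) identity
\[
\sum_{\lambda : \lambda_1 \leq m} q^{2|\lambda|} P_{2\lambda}(1,q,q^2,\ldots;q^{2n}) = \sum_{r_*, s_*} \D_{m,2n}(r_*,s_*;q^{2n})\, q^{\C_{m,2n}(r_*,s_*) + 2r},
\]
and this is what I would focus on.

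My approach is to expand the principal specialization $P_{2\lambda}(1,q,q^2,\ldots;q^{2n})$ as a multisum over chains of partitions. The natural route is to iterate the Hall-Littlewood branching rule
\[
P_\nu(x_1, x_2, \ldots; t) = \sum_{\mu \prec \nu} \psi_{\nu/\mu}(t)\, x_1^{|\nu|-|\mu|}\, P_\mu(x_2, x_3, \ldots; t)
\]
at $t = q^{2n}$, yielding a sum indexed by chains $2\lambda = \mu^{(0)} \succ \mu^{(1)} \succ \cdots \succ \mu^{(2n)} = \varnothing$. Under the identification $s_i^{(j)} := \mu^{(j)}_i$, the outer constraint $\mu^{(0)} = 2\lambda$ translates to $s_{2i-1}^{(0)} = s_{2i}^{(0)} = 2\lambda_i$, i.e., to $s_i^{(0)} = r_{\lceil i/2 \rceil}$ with $r_i := \lambda_i$, which is precisely the substitution built into \eqref{Cdef} and \eqref{Ddef}. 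The combinatorial expansion underlying Theorem \ref{th1} (at $t = q^{2n-1}$) should apply with essentially no change, since the only structural differences are $n' = 2n$ versus $n' = 2n-1$ and an outer weight of $q^{2|\lambda|}$ in place of $q^{|\lambda|}$.

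The main obstacle is the bookkeeping needed to match each chain's contribution with $\D_{m,2n}(r_*, s_*; q^{2n})\, q^{\C_{m,2n}(r_*, s_*) + 2r}$. On the exponent side, the outer factor $q^{2|\lambda|} = q^{2r}$ directly accounts for the shift $+2r$, while the powers of $q$ accumulated from the $2n$ specialization steps combine --- after regrouping by the differences $s_i^{(a-1)} - s_i^{(a)}$ --- into the quadratic form $\tfrac{n}{2}\sum_{i,a}(s_i^{(a-1)} - s_i^{(a)})^2$ and the linear terms of \eqref{Adef}. On the coefficient side, Macdonald's combinatorial evaluation of $\psi_{\mu^{(j-1)}/\mu^{(j)}}(q^{2n})$ produces ratios of Pochhammer symbols that, collected over all $2n$ steps, collapse exactly to \eqref{Bdef}. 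Verifying these two matches is the one technical step; everything else is a direct parallel to the proof of Theorem \ref{th1}.
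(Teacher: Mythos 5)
Your overall strategy is the same as the paper's: quote Theorem 1.3 of \cite{GOW} for the product side and reduce everything to the purely combinatorial identity re-expressing $\sum_{\lambda:\lambda_1\leq m} q^{2|\lambda|}P_{2\lambda}(1,q,q^2,\ldots;q^{2n})$ as the multisum with weights $\D_{m,2n}q^{\C_{m,2n}+2r}$. However, the mechanism you propose for that combinatorial step has a genuine gap. Iterating the Hall--Littlewood branching rule on $P_{2\lambda}(1,q,q^2,\ldots;q^{2n})$ strips one variable per step, and there are infinitely many variables in the principal specialization; each iteration produces $P_\mu(q,q^2,\ldots;q^{2n})=q^{|\mu|}P_\mu(1,q,\ldots;q^{2n})$ and hence a recursion whose full expansion is a sum over chains of \emph{unbounded} length, not over chains $2\lambda=\mu^{(0)}\succ\cdots\succ\mu^{(2n)}=\varnothing$ of length $2n$. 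The number $2n$ enters your setup only through the base $t=q^{2n}$, which does not truncate the branching after $2n$ steps, so the finite chain structure you assert does not follow from the branching rule. Moreover, the index set in the theorem involves only the containment conditions $s_i^{(j)}\geq s_i^{(j+1)}$, whereas branching chains satisfy interlacing conditions; these do not match.

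Relatedly, you conflate the partition with its conjugate. In the paper's conventions $r_i=\lambda_i'$ (so that summing over $\lambda$ with $\lambda_1\leq m$ becomes summing over $r_1\geq\cdots\geq r_m\geq 0$), and the identification $s_i^{(0)}=r_{\lceil i/2\rceil}$ holds because $(2\lambda)'_i=\lambda'_{\lceil i/2\rceil}$; with your choice $r_i:=\lambda_i$ and chains in the partitions themselves, the outer constraint gives $s_i^{(0)}=2\lambda_i$, which is not $r_{\lceil i/2\rceil}$. The paper avoids all of this by a different device: the identity $P_\lambda(1,q,q^2,\ldots;q^N)=P'_\lambda(1,q,\ldots,q^{N-1};q^N)$ converts the infinite principal specialization into a \emph{modified} Hall--Littlewood polynomial in exactly $N$ variables, and Kirillov's combinatorial formula for $Q'_\lambda$ then yields a multisum over containment chains of length $N$ starting at the conjugate $\lambda'$; this is Lemma \ref{key}, applied to $2\lambda$ in Lemma \ref{evenlm}. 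To repair your argument you would need either to establish this finite-variable reduction (essentially reproving Lemma \ref{evenlm}) or to show directly how the infinite branching expansion resums into the stated $2n$-fold sum, neither of which is addressed in the proposal.
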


We present an example of the identities that are obtained from these theorems.
\begin{Example} 
Setting $n=1$ in Theorem 1.2 and writing $s_i$ for $s_i^{(1)}$, we have that
{
\begin{align*}
&\! \! \! \sum_{\substack{r_1 \geq \cdots \geq r_m \geq 0 \\ s_1 \geq \cdots \geq s_{2m} \geq 0 \\ r_{\lceil i/2\rceil} \geq s_i}}
\frac{q^{(r_1-s_1)^2+(r_1-s_2)^2+\ldots+(r_m-s_{2m-1})^2+(r_m-s_{2m})^2 + s_1^2 + \ldots + s_{2m}^2 + s-r}}
{(q^2;q^2)_{s_1-s_2}\cdots (q^2;q^2)_{s_{2m-1}-s_{2m}}(q^2;q^2)_{s_2}
 (q^2;q^2)_{r_1-r_2} \cdots (q^2;q^2)_{r_{m-1}-r_m}} \\
&\qquad \qquad \qquad \qquad \qquad \quad \times \frac{(q^2;q^2)_{r_1-s_3}(q^2;q^2)_{r_2-s_5} \cdots (q^2;q^2)_{r_{m-1}-s_{2m-1}} 
}{(q^2;q^2)_{r_1-s_1}(q^2;q^2)_{r_2-r_3} \cdots (q^2;q^2)_{r_m-s_{2m-1}}} \\
\\
 &\qquad=\frac{(q^2;q^2)_\infty(q^{m+2};q^{m+2})_{\infty}}{(q)_\infty^2}\theta(q;q^{m+2}) \\
&\qquad 
= \frac{(q^{2m+4};q^{2m+4})_\infty^m}{(q)_\infty^m}\prod_{i=1}^m\theta(q^{i+1};q^{2m+4}) \prod_{1\leq i < j \leq m}\theta(q^{j-i},q^{i+j+1};q^{2m+4})
\end{align*}}
for any positive integer $m$.
Specializing to $(m,n)=(2,1)$, one finds
{\small
\begin{align*}
& \! \! \! \sum_{\substack{r_1 \geq r_2 \geq 0 \\ s_1 \geq \cdots \geq s_4 \geq 0 \\ r_{\lceil i/2\rceil} \geq s_i}}
\! \!\! \frac{(q^2;q^2)_{{r_1}-s_{3}}
q^{(r_1-s_1)^2 + (r_1-s_2)^2 + (r_2-s_3)^2 + (r_2-s_4)^2+ s_1^2 + \ldots + s_4^2 + s_1 + \ldots + s_4 - r_1-r_2}
}{(q^2;q^2)_{r_1-s_1}(q^2;q^2)_{r_2-s_3}(q^2;q^2)_{s_1-s_2}(q^2;q^2)_{s_2-s_3}(q^2;q^2)_{s_3-s_4}(q^2;q^2)_{s_4}(q^2;q^2)_{r_1-r_2}} \\
&\qquad \quad = \frac{(q^2;q^2)_\infty (q^{4};q^{4})_\infty}{(q)_\infty^{2}} \theta(q;q^{4})
= \frac{(q^8;q^8)_\infty^2}{(q)_\infty^2} \theta(q^{2};q^8)\theta(q^3;q^8) \theta(q,q^{4};q^8).
\end{align*}}
\end{Example}

We now turn to the identities of the form \eqref{rwform}. The following theorems are reformulations of Theorems 5.10--5.12 of \cite{RW}.

\begin{Theorem}
For positive integers $m$ and $n$ let $\kappa := m + 2n + 1$ and $n':=2n$. Then we have
\begin{align*}
&\sum_{s_*} \B_{m,n'}(s_*;q^{n'})q^{\A_{m,n'}(s_*)+\frac{1}{2}s^{(0)}} \\
&\qquad=\frac{(q^\kappa;q^\kappa)_\infty^{n-1}(q^{\kappa/2};q^{\kappa/2})_\infty}{(q;q)_\infty^{n-1}(q^{1/2};q^{1/2})_\infty}\prod_{i=1}^n\theta(q^i;q^{\kappa/2})\prod_{1\leq i < j \leq n}\theta(q^{j-i},q^{i+j};q^\kappa),
\end{align*}
where the sum ranges over sets of decreasing integers $s_1^{(j)} \geq \cdots \geq s_m^{(j)}$ for $0 \leq j \leq n'-1$ satisfying $s_i^{(j)} \geq s_i^{(j+1)}$.
\end{Theorem}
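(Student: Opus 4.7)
The plan is to derive this theorem by rewriting Theorem 5.10 of \cite{RW}, which already gives a Hall--Littlewood sum equal to the same infinite product, in combinatorial form. The input identity has the shape
\[
\sum_{\lambda:\lambda_1\le m} C_\lambda\, q^{|\lambda|/2}\, P_\lambda(1,q,q^2,\dots;q^{n'}) \;=\; \text{(the infinite product in the statement)},
\]
where $C_\lambda$ is a Pochhammer prefactor depending only on the part multiplicities of $\lambda$. The goal is therefore to show that, for each partition $\lambda$ with $\lambda_1 \le m$, after identifying $s_*^{(0)}$ with the conjugate partition $\lambda'$, one has
\[
C_\lambda\, q^{|\lambda|/2}\, P_\lambda(1,q,q^2,\dots;q^{n'}) \;=\; \sum_{s_*^{(1)},\dots,s_*^{(n'-1)}} \B_{m,n'}(s_*;q^{n'})\, q^{\A_{m,n'}(s_*) + \tfrac12 s^{(0)}},
\]
with the right-hand sum running over chains $s_*^{(0)} \ge s_*^{(1)} \ge \cdots \ge s_*^{(n'-1)}$ of the prescribed type. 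The constraint $\lambda_1 \le m$ of \cite{RW} matches the condition that $\lambda'$ has at most $m$ parts, in agreement with the range of $i$ in the sum.

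The main tool is a combinatorial expansion of $P_\lambda(1,q,q^2,\dots;t)$ at $t = q^{n'}$. Iterating the Hall--Littlewood branching rule produces a sum over chains of partitions $\lambda = \mu^{(0)} \supseteq \mu^{(1)} \supseteq \cdots \supseteq \mu^{(n'-1)} \supseteq \mu^{(n')} = \emptyset$ with each skew shape $\mu^{(a-1)}/\mu^{(a)}$ a horizontal strip, weighted by a product of branching coefficients and a monomial in $q$ recording the principal specialization. Setting $s_i^{(j)} := (\mu^{(j)})'_i$ translates the strip conditions into the inequalities $s_i^{(j-1)} \ge s_{i+1}^{(j)}$ and $s_i^{(j)} \ge s_i^{(j+1)}$ built into our sum, and at $t = q^{n'}$ each branching coefficient factors, over the index $i$, into exactly the $q$-binomial ratio
\[
\frac{(q)_{s_i^{(a-1)}-s_{i+1}^{(a)}}}{(q)_{s_i^{(a-1)}-s_i^{(a)}}\,(q)_{s_i^{(a)}-s_{i+1}^{(a)}}}
\]
appearing in the double product of $\B_{m,n'}$. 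The prefactor $C_\lambda$ from \cite{RW} then supplies the remaining factor $\prod_i 1/(q)_{s_i^{(0)}-s_{i+1}^{(0)}}$.

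The remaining task, and the main obstacle, is to verify that the $q$-exponents match. The principal specialization weight together with the substitution $t = q^{n'}$ should produce the quadratic piece $\tfrac{n'}{2}\sum_{i,a}(s_i^{(a-1)}-s_i^{(a)})^2$ of $\A_{m,n'}$ along with the linear contributions $s^{(1)} + \cdots + s^{(n'-1)}$; combining these with the half-integer shift $q^{|\lambda|/2} = q^{\tfrac12 s^{(0)}}$ from \cite{RW} and the linear shift $-\tfrac{n'}{2}s^{(0)}$ coming from the initial step of the chain then yields $\A_{m,n'}(s_*) + \tfrac12 s^{(0)}$. Care is needed to keep track of half-integer powers of $q$, to handle the endpoint convention $s_*^{(n')} = 0$, and to confirm the exact form of the branching coefficients (whether $\varphi$ or $\psi$ in Macdonald's notation) after conjugation. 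Once this bookkeeping is complete, summing over $s_*^{(0)} = \lambda'$ reproduces the sum over $\lambda$ in \cite[Theorem 5.10]{RW}, and the claimed identity follows.
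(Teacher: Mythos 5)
Your overall reduction---start from Theorem 5.10 of \cite{RW}, index partitions by their conjugates so that $s_*^{(0)}=\lambda'$ and $|\lambda|=s^{(0)}$, and expand the Hall--Littlewood polynomial combinatorially---is the same strategy as the paper's, but the expansion itself (the analogue of Lemma \ref{key}) is where your argument breaks down. You propose to obtain it by iterating the branching rule for $P_\lambda$ at the specialization $(1,q,q^2,\ldots)$ with $t=q^{n'}$. That specialization involves infinitely many variables, so iterating the branching rule produces chains of unbounded length, not chains $\lambda=\mu^{(0)}\supseteq\cdots\supseteq\mu^{(n')}=\emptyset$ of length $n'$; there is no finite-variable branching available here, since $P_\lambda(1,q,\ldots,q^{n'-1};q^{n'})$ is not the same thing as $P_\lambda(1,q,q^2,\ldots;q^{n'})$. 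The paper obtains the finite chain by passing to the modified Hall--Littlewood polynomials, using $P_\lambda(1,q,q^2,\ldots;q^{n'})=P'_\lambda(1,q,\ldots,q^{n'-1};q^{n'})$ as in \eqref{PtoP'} together with the Kirillov--Warnaar--Zudilin formula \eqref{Q'} for $Q'_\lambda$, which is a sum over containment chains $0=\mu^{(n')}\subseteq\cdots\subseteq\mu^{(0)}=\lambda'$ with no strip conditions; this input is entirely absent from your sketch. Nor can it be replaced by the ordinary branching chains you describe: even granting a length-$n'$ chain with horizontal strips, conjugation turns a horizontal strip into a vertical strip, i.e.\ $s_i^{(a-1)}-s_i^{(a)}\in\{0,1\}$, a restriction which does not (and must not) appear in the sum of the theorem, so your claimed translation of the strip conditions into the inequalities $s_i^{(j)}\ge s_i^{(j+1)}$ is incorrect.

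Two further points. The factor $\prod_{i=1}^{m}1/(q^{n'};q^{n'})_{s_i^{(0)}-s_{i+1}^{(0)}}$ inside $\B_{m,n'}(s_*;q^{n'})$ is not supplied by a Pochhammer prefactor from \cite{RW}: in this reformulation no extra prefactor survives (contrast Theorems \ref{b} and \ref{th7}, where genuine prefactors are carried over explicitly), and the factor appears in base $q^{n'}$, which marks it as coming from the normalization \eqref{QtoP} relating $Q'_\lambda$ and $P'_\lambda$, i.e.\ from the expansion of $P_\lambda$ itself rather than from the input identity. Finally, the exponent identification you defer as ``the main obstacle'' is precisely the computation that has to be done: one must check that the weight produced by \eqref{Q'} at $x_a=q^{a-1}$ with base $q^{n'}$ equals $\A_{m,n'}(s_*)$, which is the calculation carried out in the proof of Lemma \ref{key}. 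As it stands, the proposal identifies the correct target identity but neither establishes the combinatorial expansion nor verifies the exponent, so the proof is incomplete.
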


\begin{Theorem}
For positive integers $m$ and $n$ let $\kappa := m + 2n$ and $n' := 2n-1$. Then we have
\begin{align*}
&\sum_{s_*} \B_{m,n'}(s_*;q^{n'}) q^{\A_{m,n'}(s_*)+\frac{1}{2}s^{(0)}} \\
&\qquad=\frac{(q^\kappa;q^\kappa)_\infty^n}{(q;q)^{n-1}_\infty(q^{1/2};q)_\infty(q^2;q^2)_\infty}\prod_{i=1}^n\theta(q^{i+(m-1)/2};q^\kappa)\prod_{1\leq i < j \leq n}\theta(q^{j-i},q^{i+j-1};q^\kappa)
\end{align*}
and
\begin{align*}
&\sum_{s_*} \B_{m,2n}(s_*;q^{2n})\left(\prod_{i=1}^{m-1}(-q^n;q^n)_{s_{i}^{(0)}-s_{i+1}^{(0)}}\right) q^{\A_{m,2n}(s_*)+\frac{1}{2}s^{(0)}} \\
&\qquad=\frac{(q^\kappa;q^\kappa)_\infty^{n-1}(q^{\kappa/2};q^{\kappa/2})_\infty}{(q;q)_\infty^{n-1}(q^{1/2};q)_\infty^2(q^2;q^2)_\infty} \prod_{i=1}^n \theta(q^{i-1/2};q^{\kappa/2}) \prod_{1\leq i < j \leq n}\theta(q^{j-i},q^{i+j-1};q^\kappa).
\end{align*}
\end{Theorem}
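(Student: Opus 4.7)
The plan is to reduce each identity to the corresponding Hall-Littlewood sum identity of \cite{RW}---Theorem 5.11 for the first identity and Theorem 5.12 for the second. The bridge in both cases is a combinatorial rewriting of the principal specialization $P_\lambda(1,q,q^2,\ldots;q^N)$ as a sum of monomials indexed by chains of partitions, exactly as prescribed by $\A_{m,N}$ and $\B_{m,N}$.

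The central step is the following combinatorial lemma: for any positive integer $N$ and any partition $\lambda$ parameterizing the sum (so that $s^{(0)}=\lambda$ has at most $m$ parts, up to the conjugation identifying it with the sum variable in \cite{RW}),
\[
P_\lambda(1,q,q^2,\ldots;q^N) \;=\; \sum_{s_*:\ s^{(0)}=\lambda}\B_{m,N}(s_*;q^N)\,q^{\A_{m,N}(s_*)},
\]
where $s_*$ ranges over chains $s^{(0)}\geq s^{(1)}\geq\cdots\geq s^{(N)}=0$ with interlacing $s_i^{(j)}\geq s_{i+1}^{(j)}$. To prove the lemma, I would split the specialization $(1,q,q^2,\ldots)$ into consecutive blocks of $N$ variables and iterate the Hall-Littlewood branching rule. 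Each block contributes a skew Hall-Littlewood polynomial $P_{s^{(a-1)}/s^{(a)}}$ whose principal specialization equals the $\psi$-function of the skew shape; a direct calculation identifies this with the ratio $(q)_{s_i^{(a-1)}-s_{i+1}^{(a)}}/\bigl[(q)_{s_i^{(a-1)}-s_i^{(a)}}(q)_{s_i^{(a)}-s_{i+1}^{(a)}}\bigr]$ appearing in $\B_{m,N}$. The geometric progressions of variables across the blocks collectively supply the quadratic term $\tfrac{N}{2}\sum_{i,a}(s_i^{(a-1)}-s_i^{(a)})^2$ and the linear term $s^{(1)}+\cdots+s^{(N-1)}$ of $\A_{m,N}$, and the $b_\lambda(q^N)$-normalization of $P_\lambda$ produces the remaining factor $\prod_{i=1}^m(q)_{s_i^{(0)}-s_{i+1}^{(0)}}^{-1}$.

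Granted the lemma, the first identity (the case $N=n'=2n-1$) is obtained by multiplying by $q^{|\lambda|/2}$ and summing over admissible $\lambda$: the left-hand side becomes $\sum_\lambda q^{|\lambda|/2}P_\lambda(1,q,q^2,\ldots;q^{n'})$, which Theorem 5.11 of \cite{RW} equates with the stated infinite-product modular function. For the second identity (the case $N=2n$), the extra factor $\prod_{i=1}^{m-1}(-q^n;q^n)_{s_i^{(0)}-s_{i+1}^{(0)}}$ is precisely the product that converts $P_\lambda(1,q,q^2,\ldots;q^{2n})$ into the modified Hall-Littlewood polynomial appearing in Theorem 5.12 of \cite{RW}; with this reinterpretation that theorem yields the claimed evaluation.

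The main obstacle is the combinatorial lemma: the $\psi$-coefficients produced by iterated branching must be matched with the factors of $\B_{m,N}$, and the geometric-progression weights must be collected into the quadratic and linear parts of $\A_{m,N}$, with particular attention to the $b_\lambda(q^N)$-normalization attached to $s^{(0)}$. Once this correspondence is verified---which is the central technical step underlying all of the theorems in the paper---both identities follow from direct substitution and appeal to \cite{RW}.
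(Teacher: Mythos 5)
Your overall architecture is the same as the paper's: a combinatorial lemma rewriting $P_\lambda(1,q,q^2,\ldots;q^N)$ as $\sum_{s_*}\B_{m,N}(s_*;q^N)q^{\A_{m,N}(s_*)}$ (this is exactly Lemma \ref{key}, with the important precision that the chain starts at the \emph{conjugate}, $s_i^{(0)}=\lambda_i'$, which is what turns the constraint $\lambda_1\le m$ into ``at most $m$ rows'' and what makes $m_i(\lambda)=s_i^{(0)}-s_{i+1}^{(0)}$), followed by substitution into the relevant identities of \cite{RW} using $|\lambda|=s^{(0)}$. That second step matches the paper, apart from an attribution slip: by the paper's correspondence both identities of this theorem come from Theorem 5.11 of \cite{RW} (Theorem 5.12 is used for the next theorem, the one with $\kappa=m+2n-1$), and the extra factor $\prod_{i=1}^{m-1}(-q^n;q^n)_{s_i^{(0)}-s_{i+1}^{(0)}}$ is translated simply as the weight $\prod_i(-q^n;q^n)_{m_i(\lambda)}$ appearing in that sum, not as a device converting $P_\lambda$ into a modified Hall--Littlewood polynomial.

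The genuine gap is in your proposed proof of the central lemma. Splitting $(1,q,q^2,\ldots)$ into \emph{consecutive} blocks of $N$ variables and iterating the branching rule produces an infinite chain of partitions (one step per block, and there are infinitely many blocks), not the $N$-level chains $s^{(0)}\supseteq s^{(1)}\supseteq\cdots\supseteq s^{(N)}=0$ that index $\A_{m,N}$ and $\B_{m,N}$; moreover, the specialization of a skew Hall--Littlewood polynomial at a whole block $(q^{(a-1)N},\ldots,q^{aN-1})$ with parameter $q^N$ is not a single $\psi$-coefficient (the variables are not powers of the parameter), and the factors of $\B_{m,N}$ are $q$-binomial coefficients $\tfrac{(q)_{s_i^{(a-1)}-s_{i+1}^{(a)}}}{(q)_{s_i^{(a-1)}-s_i^{(a)}}(q)_{s_i^{(a)}-s_{i+1}^{(a)}}}$ in differences of conjugate parts, not $\psi$-functions. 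A working branching argument would instead split the alphabet into the $N$ interleaved progressions $q^{a-1}(1,q^N,q^{2N},\ldots)$, $a=1,\ldots,N$, each of which is the powers of the parameter $q^N$ itself, where the skew principal specialization does factor and essentially re-derives the known formula; the paper avoids this entirely by quoting the Kirillov/Warnaar--Zudilin formula \eqref{Q'} for $Q'_\lambda$ together with \eqref{QtoP} and \eqref{PtoP'}. With the lemma proved by either of these routes, the rest of your reduction goes through as in the paper.
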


\begin{Theorem} \label{b}
For positive integers $m$ and $n$ let $\kappa := m + 2n-1$ and $n':=2n-1$. Then we have
\begin{align*}
&\sum_{s_*}\B_{m,n'}(s_*;q^{n'})\left(\prod_{i=1}^{m-1}(-q^{n-1/2};q^{n-1/2})_{s_{i}^{(0)}-s_{i+1}^{(0)}}\right)q^{\A_{m,n'}(s_*)+\frac{1}{2}s^{(0)}} \\
&\qquad = \frac{(q^\kappa;q^\kappa)_\infty^n}{(q;q)_\infty^{n-1}(q^{1/2};q^{1/2})_\infty} \prod_{i=1}^n\theta(q^{i+m/2-1/2};q^\kappa) \prod_{1 \leq i < j \leq n}\theta(q^{j-i},q^{i+j-2};q^\kappa).
\end{align*}
\end{Theorem}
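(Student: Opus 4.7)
The plan is to mirror the template used for Theorems~\ref{th1}--\ref{th3}: start from the Hall-Littlewood form of the identity in \cite{RW}, and replace $P_\lambda(1,q,q^2,\ldots;q^{n'})$ by an explicit combinatorial sum indexed by chains of partitions, matching the result against \eqref{Adef}--\eqref{Bdef}. The corresponding identity in \cite{RW} (the third of the Theorems~5.10--5.12 family generalizing \eqref{rwform}) expresses the right-hand side of Theorem~\ref{b} as
\begin{equation*}
\sum_{\lambda : \lambda_1 \leq m} b_\lambda(q^{n-1/2})\, q^{|\lambda|/2}\, P_\lambda(1,q,q^2,\ldots;q^{n'}),
\end{equation*}
where $b_\lambda(t) = \prod_{i\geq 1}(-t;t)_{m_i(\lambda)}$ is the signed Hall-Littlewood normalization and $m_i(\lambda)$ denotes the multiplicity of $i$ among the parts of $\lambda$.

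The core combinatorial input is a chain expansion for the principally specialized Hall-Littlewood polynomial, obtained by iterating Macdonald's branching rule $P_\lambda(x_1,\ldots,x_N;t) = \sum_{\mu \prec \lambda}\psi_{\lambda/\mu}(t)\, x_N^{|\lambda|-|\mu|}\, P_\mu(x_1,\ldots,x_{N-1};t)$ on the specialization $x_a = q^{a-1}$ with $t = q^{n'}$. The matching of the parameter $t$ with $q^{n'}$ in the specialization allows the infinite chain of horizontal strips to be regrouped into $n'$ blocks, producing an expansion of $P_\lambda(1,q,q^2,\ldots;q^{n'})$ (for $\lambda_1 \leq m$) as a weighted sum over partition chains $\lambda = \mu^{(0)} \succeq \mu^{(1)} \succeq \cdots \succeq \mu^{(n')} = \emptyset$, with weight a product of $\psi_{\mu^{(j-1)}/\mu^{(j)}}(q^{n'})$-coefficients and an explicit power of $q$. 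Setting $s_i^{(j)} := (\mu^{(j)})'_i$ for $1 \leq i \leq m$ and $0 \leq j \leq n'$, the interlacings $\mu^{(j-1)} \succeq \mu^{(j)}$ become the inequalities on the $s_i^{(j)}$ required in Theorem~\ref{b}, and $|\mu^{(j)}| = s^{(j)}$, so the prefactor $q^{|\lambda|/2}$ supplies precisely the $\tfrac{1}{2}s^{(0)}$ term.

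Three identifications then remain. First, the product of the $\psi_{\mu^{(j-1)}/\mu^{(j)}}(q^{n'})$-coefficients, which expands as an explicit product of $(q^{n'};q^{n'})_k$-Pochhammer symbols in the multiplicities of the $\mu^{(j)}$, should collapse to $\B_{m,n'}(s_*;q^{n'})$ of \eqref{Bdef}. Second, the combined power of $q$ coming from the branching monomials $x_N^{|\lambda|-|\mu|}$ and from the implicit $q^{n'}$-powers hidden inside the $\psi$-coefficients should equal $\A_{m,n'}(s_*)$ of \eqref{Adef}. Third, using $m_i(\lambda) = s_i^{(0)} - s_{i+1}^{(0)}$ together with the boundary $s_{m+1}^{(0)} = 0$, the factor $b_\lambda(q^{n-1/2})$ should reduce to $\prod_{i=1}^{m-1}(-q^{n-1/2};q^{n-1/2})_{s_i^{(0)}-s_{i+1}^{(0)}}$, with the boundary term at $i = m$ absorbed into the normalization on the right-hand side.

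I expect the second identification to be the most delicate step: the quadratic piece $\tfrac{n'}{2}\sum_{i,a}(s_i^{(a-1)} - s_i^{(a)})^2$ and the linear correction $-\tfrac{n'}{2}s^{(0)} + s^{(1)} + \cdots + s^{(n'-1)}$ inside $\A_{m,n'}$ must both emerge cleanly after combining the branching monomials with the $q^{n'}$-powers implicit in Macdonald's $\psi$-coefficients. This is essentially algorithmic bookkeeping, strictly parallel to the analogous computation carried out in the proofs of Theorems~\ref{th1}--\ref{th3}; the genuinely new ingredients specific to Theorem~\ref{b} are only the signed factor $b_\lambda(q^{n-1/2})$ and the half-integer base $q^{n-1/2}$ in the extra Pochhammer symbols, which can be handled by the same reorganization of the sum by multiplicities of the parts of $\lambda$.
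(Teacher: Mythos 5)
Your top-level strategy (start from the bounded Littlewood identity in \cite{RW}, pass to conjugates so that $|\lambda|=s^{(0)}$ and $m_i(\lambda)=s_i^{(0)}-s_{i+1}^{(0)}$, and replace $P_\lambda(1,q,q^2,\ldots;q^{n'})$ by a chain expansion with weights $\A_{m,n'}$ and $\B_{m,n'}$) is exactly the paper's, but the mechanism you propose for the chain expansion has a genuine gap. The paper does not obtain this expansion from Macdonald's branching rule; it uses the plethystic identity \eqref{PtoP'}, $P_\lambda(1,q,q^2,\ldots;q^{n'})=P'_\lambda(1,q,\ldots,q^{n'-1};q^{n'})$, together with the Kirillov/Warnaar--Zudilin formula \eqref{Q'} for the \emph{modified} polynomial $Q'_\lambda$ in $n'$ variables and the normalization \eqref{QtoP}; this is the content of Lemma \ref{key}. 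Your substitute --- iterate the branching rule on $x_a=q^{a-1}$ with $t=q^{n'}$ and ``regroup the infinite chain of horizontal strips into $n'$ blocks'' --- does not produce the required structure. Grouping consecutive variables of the infinite principal specialization merges horizontal strips into fatter skew shapes but still yields an \emph{infinite} chain, not a chain of length $n'$; and the chains appearing in the theorem are not interlacings at all (a horizontal-strip chain would force $s_i^{(j-1)}-s_i^{(j)}\le 1$ after conjugation, whereas the sum in Theorem \ref{b} allows arbitrary differences, constrained only by containment). The finite length $n'$ of the chain comes from $Q'_\lambda$ having exactly $n'$ variables after the plethystic substitution, and the branching that is iterated is the one-variable expansion of $Q'$, i.e.\ essentially Kirillov's formula. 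So the step you dismiss as ``algorithmic bookkeeping'' is precisely the nontrivial input the paper imports from \cite{K,WZ}; without it (or an independent proof of Lemma \ref{key}) your argument does not close.

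A secondary inaccuracy: you posit that the factor from \cite{RW} is $b_\lambda(q^{n-1/2})=\prod_{i\ge 1}(-q^{n-1/2};q^{n-1/2})_{m_i(\lambda)}$ and that the $i=m$ term can be ``absorbed into the normalization on the right-hand side.'' It cannot: that term equals $(-q^{n-1/2};q^{n-1/2})_{s_m^{(0)}}$, which depends on $\lambda$, so it is not a constant that can be moved into the infinite product. The identity in \cite{RW} must already match the truncated product $\prod_{i=1}^{m-1}$ appearing in Theorem \ref{b} (equivalently, the multiplicity $m_m(\lambda)$ is treated differently there), and your reduction should verify this rather than assume it.
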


\begin{remark}
As the authors mention in \cite{RW}, setting $n=1$ in Theorem \ref{b} gives rise to Bressoud's even modulus identities in \cite{B}. Indeed, this is easily recognizable using the parameters defined in \eqref{Adef} and \eqref{Bdef}. Writing $s_i$ for $s_i^{(0)}$, we have
\[\mathcal{A}_{m,1}(s_*) = -\frac{1}{2}(s_1 + \ldots + s_m) + \frac{1}{2}(s_1^2 + \ldots + s_m^2)\]
and
\[
\mathcal{B}_{m,1}(s_*;q) = \prod_{j=1}^m\frac{1}{(q)_{s_{i}-s_{i+1}}}.
\]
Thus, the left hand side above is
\begin{align*}
\sum_{s_1 \geq \cdots \geq s_m \geq 0} \frac{1}{(q;q)_{s_m}}\prod_{i=1}^{m-1}\frac{(-q^{1/2};q^{1/2})_{s_{i}-s_{i+1}}}{(q;q)_{s_{i}-s_{i+1}}}q^{\frac{1}{2}(s_1^2+\ldots + s_m^2)}.
\end{align*}
Putting $q^2$ for $q$ and using the fact that $\frac{(-q;q)_k}{(q^2;q^2)_k} = \frac{1}{(q;q)_k}$, we obtain
\[\sum_{s_1\geq\cdots\geq s_m \geq 0} \frac{q^{s_1^2+\ldots + s_m^2}}{(q)_{s_1-s_2}\cdots (q)_{s_{m-1}-s_m}(q^2;q^2)_{s_m}} = \frac{q^{2m+2};q^{2m+2}}{(q)_\infty}\theta(q^{m+1};q^{2m+2}).\]
These are the corresponding even moduli identities to the odd-modulus Andrews-Gordon identities.
\end{remark}

For integers $s_1, \ldots, s_m$ we write $\text{alt}(s_*) := s_1 - s_2 +\ldots \pm s_m$. The following is a reformulation of Theorem 5.14 of \cite{RW}.

\begin{Theorem} \label{th7}
For positive integers $m$ and $n$, let $\kappa := 2m+2n$. Then we have
\begin{align*}
&{\sum_{s_*}}' \B_{2m,2n}(s_*)\left(\prod_{i=1}^{2m-1}(q^{2n};q^{4n})_{\left\lceil \frac{s_{i}^{(0)}-s_{i+1}^{(0)}}{2} \right\rceil}\right) q^{\A_{2m,2n}(s_*)+\frac{1}{2}s^{(0)}+\mathrm{alt}(s_*^{(0)})} \\
&\qquad = \frac{(q^\kappa;q^\kappa)_\infty^n(-q^{\kappa/2};q^{\kappa})_\infty}{2(q;q)_\infty^{n}} \prod_{i=1}^n \theta(-q^{i-1},q^{i+\kappa/2-1};q^\kappa) \prod_{1 \leq i <  j \leq n}\theta(q^{j-i},q^{i+j-2};q^{\kappa}),
\end{align*}
where the prime on the sum denotes the restriction ``$s_i^{(0)}-s_{i+1}^{(0)}$ is even for $i=1,3,\ldots,2m-1$."
\end{Theorem}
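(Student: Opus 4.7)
The plan is to deduce Theorem~\ref{th7} directly from Theorem~5.14 of \cite{RW} by expanding the Hall-Littlewood polynomials occurring there as sums over the chain parameters $\{s_i^{(j)}\}$. The key intermediate result, which presumably underlies the proofs of the earlier theorems in this paper as well, is a combinatorial expansion of the shape
\[
q^{|\lambda|/2} P_\lambda(1,q,q^2,\ldots;q^{n'}) \;=\; \sum_{\substack{s_i^{(0)}=\lambda_i \\ s_i^{(j)} \text{ a chain}}} \B_{m,n'}(s_*;q^{n'})\, q^{\A_{m,n'}(s_*)+\frac{1}{2}s^{(0)}}.
\]
To prove this I would iterate the one-variable Pieri/branching rule
\[
P_\lambda(x_1,\ldots,x_N;t)=\sum_{\mu \prec \lambda}\psi_{\lambda/\mu}(t)\,x_N^{|\lambda|-|\mu|}\,P_\mu(x_1,\ldots,x_{N-1};t)
\]
with $x_i = q^{i-1}$ and $t = q^{n'}$, recording the successive intermediate partitions as $s_i^{(j)}:=\mu_i^{(j)}$. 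The quadratic form $\A_{m,n'}$ then collects the principal-specialization exponents $\sum_j(j-1)|\mu^{(j-1)}/\mu^{(j)}|$, while $\B_{m,n'}$ collects the Hall-Littlewood coefficients $\prod_j \psi_{\mu^{(j-1)}/\mu^{(j)}}(q^{n'})$ after rewriting them via the $q$-binomial identity $\binom{a+b}{a}_q=(q)_{a+b}/((q)_a(q)_b)$.

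Granting this expansion, the proof of Theorem~\ref{th7} becomes a matter of transcribing Theorem~5.14 of \cite{RW}. There the Hall-Littlewood sum carries a Pochhammer prefactor and an alternating-sign twist. Under the substitution $\lambda_i=s_i^{(0)}$, the prefactor becomes precisely $\prod_{i=1}^{2m-1}(q^{2n};q^{4n})_{\lceil (s_i^{(0)}-s_{i+1}^{(0)})/2\rceil}$ together with the parity condition that $s_i^{(0)}-s_{i+1}^{(0)}$ be even for odd $i$ (the prime restriction on the sum), while the twist becomes $q^{\mathrm{alt}(s_*^{(0)})}$. The right-hand side of \cite{RW}'s identity is already in the desired product form, so matching it with the right-hand side of Theorem~\ref{th7} completes the deduction.

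The main obstacle is the bookkeeping in the combinatorial expansion step: one must verify that the product of branching coefficients $\psi_{\mu^{(j-1)}/\mu^{(j)}}(q^{n'})$, weighted by the appropriate principal-specialization monomial, reassembles exactly as $\B_{m,n'}(s_*;q^{n'})\, q^{\A_{m,n'}(s_*)}$. Since $\psi$ is most naturally expressed in terms of multiplicities $m_i(\mu)$ rather than the parts $\mu_i$ themselves, one must cascade several $q$-binomial telescopings to reach the ratio of $(q)$-Pochhammer symbols displayed in $\B_{m,n'}$, and simultaneously convert the additive contribution $\sum_j(j-1)|\mu^{(j-1)}/\mu^{(j)}|$ into the quadratic expression defining $\A_{m,n'}$. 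Once this identity is in hand, the remaining verifications concerning the prime restriction and the alternating twist follow directly from the definition of \cite{RW}'s Pochhammer prefactor.
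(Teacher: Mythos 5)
Your high-level strategy---expand the Hall--Littlewood polynomials as a sum over chain parameters $s_i^{(j)}$ and then transcribe Theorem 5.14 of \cite{RW}---is the same as the paper's, but the way you propose to obtain the expansion does not work, and your substitution anchors the chain at the wrong partition. Iterating the one-variable branching rule with $x_i=q^{i-1}$ and $t=q^{n'}$ cannot produce the structure encoded by $\A_{m,n'}$ and $\B_{m,n'}$: the principal specialization $P_\lambda(1,q,q^2,\ldots;q^{n'})$ involves infinitely many variables, so the iteration yields chains of unbounded length whose consecutive members differ by horizontal strips (interlacing) and are contained in $\lambda$ itself. The expansion actually needed has exactly $n'$ levels (the number of levels is tied to the exponent in the Hall--Littlewood parameter $q^{n'}$, not to the number of variables), is ordered only by containment $s_i^{(j)}\geq s_i^{(j+1)}$ with no interlacing condition, and is anchored at the conjugate, $s_i^{(0)}=\lambda_i'$. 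The paper obtains this not from branching but from the passage to modified Hall--Littlewood polynomials, $P_\lambda(1,q,q^2,\ldots;q^{n})=P'_\lambda(1,q,\ldots,q^{n-1};q^{n})$, a symmetric polynomial in only $n$ variables, combined with the Kirillov/Warnaar--Zudilin formula \eqref{Q'} for $Q'_\lambda$, whose chains $0=\mu^{(n)}\subseteq\cdots\subseteq\mu^{(0)}=\lambda'$ give exactly $\A_{m,n}$ and $\B_{m,n}$ after dividing out $\prod_{j}(q^n;q^n)_{\lambda_j'-\lambda_{j+1}'}$; this is Lemma \ref{key}. Without that step, the assertion that the branching coefficients ``reassemble'' into $\B_{m,n'}\,q^{\A_{m,n'}}$ is not bookkeeping to be checked---in the form you state it, with levels indexed by the variables, it is not true.

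The anchoring also matters for the transcription itself. Your substitution $s_i^{(0)}=\lambda_i$ would make $\mathrm{alt}(s_*^{(0)})$ the alternating sum of the parts (the number of odd columns of $\lambda$), whereas the statistic appearing in Theorem 5.14 of \cite{RW} is $\mathrm{odd}(\lambda)$, the number of odd parts; likewise the Pochhammer prefactor and the parity restriction are governed by the multiplicities $m_i(\lambda)$. These match the theorem only because $s_i^{(0)}=\lambda_i'$, so that $s_i^{(0)}-s_{i+1}^{(0)}=m_i(\lambda)$ and $\mathrm{alt}(s_*^{(0)})=\sum_{i\ \mathrm{odd}}m_i(\lambda)=\mathrm{odd}(\lambda)$, and because the bound on $\lambda_1$ in \cite{RW} then corresponds to the conjugate having at most $2m$ parts, i.e.\ to the finitely many unbounded variables $s_1^{(0)}\geq\cdots\geq s_{2m}^{(0)}\geq 0$ over which the theorem sums. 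So the correct deduction is: apply Lemma \ref{key} (conjugate anchoring) to rewrite $q^{|\lambda|/2}P_\lambda(1,q,q^2,\ldots;q^{2n})$, and then translate the prefactor, the parity restriction, and $\mathrm{odd}(\lambda)$ via $|\lambda|=s^{(0)}$, $m_i(\lambda)=s_i^{(0)}-s_{i+1}^{(0)}$, and $\mathrm{odd}(\lambda)=\mathrm{alt}(s_*^{(0)})$.
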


This paper is organized as follows.
In the next section we define the Hall-Littlewood polynomials and recall a key formula from \cite{K,WZ}. This allows us to prove two lemmas re-expressing the Hall-Littlewood polynomials that appear in \eqref{even} and \eqref{rwform}. We then apply these to prove Theorems \ref{th1}--\ref{th7} in the following section.

\section*{Acknowledgements}
This project was carried out during the 2015 REU at Emory University. The author would like to thank Ken Ono for suggesting this problem and Ole Warnaar for pointing out that these methods could also be applied to the identities in \cite{RW}. The author also thanks Michael Mertens for useful discussions and the NSF for its financial support.

\section{Hall-Littlewood $q$-series}
Our proofs of Theorems \ref{th1}--\ref{th7} rely on explicit combinatorial formulas for the Hall-Lilttlewood polynomials appearing on the left-hand side of the identities in \cite{GOW} and \cite{RW}. After defining these objects, we state and prove these two formulas as Lemmas \ref{key} and \ref{evenlm}.

A \textit{partition} $\lambda = (\lambda_1, \lambda_2, \ldots)$ is a decreasing sequence of non-negative integers $\lambda_1 \geq \lambda_2 \geq \cdots$ with a finite number $l(\lambda)$ of nonzero terms. By $2\lambda$ we mean the partition $(2\lambda_1, 2\lambda_2, \ldots)$. To each partition, one can associate a Ferrers-Young diagram whose $i$th row consists of $\lambda_i$ boxes. The \textit{conjugate partition} $\lambda'$ is defined to be the partition associated to the transpose of the Ferrers-Young diagram of $\lambda$. The multiplicity $m_i = m_i(\lambda)$ of an integer $i$ is the number of times it appears in the partition and is equal to $\lambda_{i+1}'-\lambda_i'$.
Given two partitions $\lambda, \mu$ we write $\lambda \subseteq \mu$ if the Ferrers-Young diagram for $\lambda$ is contained in that of $\mu$, in other words if $\lambda_i \leq \mu_i$ for all $i$. Given a partition $\lambda$, with $\lambda_1 \leq n$, the associated \textit{Hall-Littlewood polynomial} is defined as
\[P_\lambda(x_1,\ldots, x_n; q) = \prod_{i=0}^n \frac{(1-q)^{m_i}}{(q)_{m_i}}
\sum_{w \in \mathfrak{S}_n}w\left(x_1^{\lambda_1} \cdots x_n^{\lambda_n} \prod_{i<j} \frac{x_i - qx_j}{x_i-x_j} \right),
\]
where $m_0 := n - l(\lambda)$ and the symmetric group $\mathfrak{S}_n$ acts by permuting the $x_i$. One can extend this definition to symmetric functions in countably many variables as follows. If $p_r = x_1^r + x_2^2 + \ldots$ is the $r$-th power sum and $p_\lambda = \prod_{i\geq 1}p_{\lambda_i}$, then the set $\{p_{\lambda}(x_1,\ldots,x_n)\}_{l(\lambda)\leq n}$ is a $\mathbb{Q}$-basis for the ring of symmetric functions in the $x_i$. Let $\phi_q$ be the ring homomorphism determined by $\phi_q(p_r) = p_r/(1-q^r)$. Then we define the $\textit{modified Hall-Littlewood polynomials}$ by
$P'_\lambda := \phi_q(P_\lambda)$ and
\begin{equation} \label{QtoP}
 Q_\lambda'(x_1, \ldots, x_n;q) := P'_\lambda(x_1,\ldots,x_n;q) \prod_{i\geq 1}(q)_{\lambda_i'-\lambda_{i+1}'}.
 \end{equation}
From the fact that
\[\phi_{q^n}(p_r(1,q,\ldots, q^{n-1})) = \frac{1-q^{nr}}{1-q^r} \cdot \frac{1}{1-q^{nr}} = p_r(1,q,q^2,\ldots),\]
we see that
\begin{equation} \label{PtoP'}
P_{\lambda}(1,q, q^2, \ldots ; q^n) = P_{\lambda}'(1,q,\ldots, q^{n-1};q^n).
\end{equation}

We recall the following combinatorial formula for the modified Hall-Littlewood polynomials \cite{K,WZ},
\begin{equation}\label{Q'}
Q_\lambda'(x_1, \ldots, x_n;q) = \sum \prod_{i=1}^{\lambda_1} \prod_{a=1}^n x_a^{\mu_i^{(a-1)}-\mu_i^{(a)}}q^{\binom{\mu_i^{(a-1)}-\mu_i^{(a)}}{2}}\frac{(q)_{\mu_i^{(a-1)}-\mu_{i+1}^{(a)}}}{(q)_{\mu_i^{(a-1)}-\mu_i^{(a)}} (q)_{\mu_i^{(a)} - \mu_{i+1}^{(a)}}},
\end{equation}
where the sum is over partitions $0 = \mu^{(n)} \subseteq \cdots \subseteq \mu^{(1)} \subseteq \mu^{(0)} = \lambda'$. We will combine \eqref{QtoP}-\eqref{Q'} to arrive at the following expression for the Hall-Littlewood polynomials appearing in the sum sides of the identities we wish to rewrite.

\begin{Lemma} \label{key}
Given a positive integer $m$ and a partition $\lambda$ with $\lambda_1 \leq m$, let $s_i^{(0)} = \lambda_i'$. Then for any positive integer $n$,
we have
\[P_{\lambda}(1, q, q^2, \ldots;q^n) = \! \! \! \! \! \sum_{\substack{s_{1}^{(j)} \geq \cdots \geq s_{m}^{(j)} \\s_i^{(j)} \geq s_i^{(j+1)}}} \! \! \! \! \! \B_{m,n}(s_*;q^n)q^{\A_{m,n}(s_*)},\]
where the sum ranges over decreasing sets of integers $s_i^{(j)}$ with $1 \leq j \leq n-1$ and $\A_{m,n}(s_*)$ and $\B_{m,n}(s_*;q)$ are defined in \eqref{Adef} and \eqref{Bdef}.
\end{Lemma}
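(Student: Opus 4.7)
The plan is to chain the three identities \eqref{PtoP'}, \eqref{QtoP}, and \eqref{Q'} and then verify that the resulting expression rearranges into the claimed one. First I would apply \eqref{PtoP'} to rewrite $P_{\lambda}(1,q,q^2,\ldots;q^n)$ as $P'_{\lambda}(1,q,\ldots,q^{n-1};q^n)$, then use \eqref{QtoP} to express the latter as $Q'_{\lambda}(1,q,\ldots,q^{n-1};q^n)/\prod_{i \geq 1}(q^n)_{\lambda_i'-\lambda_{i+1}'}$, and finally expand $Q'_{\lambda}$ by \eqref{Q'} with $x_a = q^{a-1}$ and base parameter $q^n$. The substitution $s_i^{(j)} := \mu_i^{(j)}$ then identifies the nested-partition chain $0 = \mu^{(n)} \subseteq \cdots \subseteq \mu^{(0)} = \lambda'$ with the constraints $s_1^{(j)} \geq \cdots \geq s_m^{(j)}$ and $s_i^{(j)} \geq s_i^{(j+1)}$ stated in the lemma, with $s_i^{(0)} = \lambda_i'$. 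Since $l(\lambda') = \lambda_1 \leq m$ and the nesting forces $s_i^{(j)} = 0$ whenever $i > \lambda_1$, extending the outer product of \eqref{Q'} from $1 \leq i \leq \lambda_1$ up to $1 \leq i \leq m$ introduces only trivial factors.

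The Pochhammer parts already assemble into $\B_{m,n}(s_*;q^n)$: the $Q'_\lambda$ contribution supplies the double product $\prod_{i,a}(q^n)_{s_i^{(a-1)}-s_{i+1}^{(a)}}/\bigl((q^n)_{s_i^{(a-1)}-s_i^{(a)}}(q^n)_{s_i^{(a)}-s_{i+1}^{(a)}}\bigr)$, while the denominator coming from \eqref{QtoP} yields $\prod_{j=1}^m 1/(q^n)_{s_j^{(0)}-s_{j+1}^{(0)}}$ (using the convention $s_{m+1}^{(0)} = 0$). The real work is therefore to verify the $q$-exponent identity
\[
\sum_{i=1}^m \sum_{a=1}^n \left[(a-1)\bigl(s_i^{(a-1)}-s_i^{(a)}\bigr) + n\binom{s_i^{(a-1)}-s_i^{(a)}}{2}\right] = \A_{m,n}(s_*).
\]
For the first summand, Abel summation at fixed $i$ gives $s_i^{(1)}+\cdots+s_i^{(n-1)} - (n-1)s_i^{(n)}$, which collapses to $\sum_{b=1}^{n-1} s_i^{(b)}$ by the convention $s_i^{(n)} = 0$; summing over $i$ produces $s^{(1)} + \cdots + s^{(n-1)}$. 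For the second, writing $\binom{k}{2} = \frac{1}{2}(k^2 - k)$ and using the telescoping $\sum_{a=1}^n (s_i^{(a-1)} - s_i^{(a)}) = s_i^{(0)}$ yields $\frac{n}{2}\sum_{i,a}(s_i^{(a-1)}-s_i^{(a)})^2 - \frac{n}{2}s^{(0)}$. Adding the two pieces reproduces the definition \eqref{Adef} of $\A_{m,n}(s_*)$ exactly.

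The main obstacle I anticipate is not conceptual but notational: keeping straight the four indexing conventions, namely (i) $s_i^{(n)} = 0$ coming from $\mu^{(n)} = 0$ in \eqref{Q'}, (ii) $s_{m+1}^{(j)} = 0$ which makes $\prod_{i\geq 1}(q^n)_{\lambda_i'-\lambda_{i+1}'}$ terminate naturally at $i=m$, (iii) the specialization of the base parameter at $q^n$ so that every Pochhammer in the output is $(q^n)_\bullet$, and (iv) the extension of the $i$-product from $\lambda_1$ to $m$, justified by the vanishing of $s_i^{(j)}$ for large $i$. With these conventions fixed at the outset, the lemma follows as a direct, essentially mechanical consequence of \eqref{PtoP'}--\eqref{Q'}.
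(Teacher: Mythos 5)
Your proposal is correct and follows essentially the same route as the paper: chain \eqref{PtoP'} and \eqref{QtoP} with the Kirillov--Warnaar--Zudilin expansion \eqref{Q'} specialized at $x_a=q^{a-1}$ with base $q^n$, identify the chain $0=\mu^{(n)}\subseteq\cdots\subseteq\mu^{(0)}=\lambda'$ with the indices $s_i^{(j)}$, and verify by telescoping that the exponent equals $\A_{m,n}(s_*)$ while the Pochhammer factors assemble into $\B_{m,n}(s_*;q^n)$. Your explicit remark that the product extends harmlessly from $i\leq\lambda_1$ to $i\leq m$ is a detail the paper leaves implicit, but the argument is the same.
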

\begin{proof}
For convenience, let $s_i^{(n)} = 0$ and $s_{m+1}^{(j)}= 0$ for all $i$ and $j$. The conditions on the indices $s_i^{(j)}$ are equivalent to the condition that the partitions $\mu^{(j)}$ defined by $\mu^{(j)} = (s_1^{(j)}, \ldots, s_{m}^{(j)})$ satisfy $0 = \mu^{(n)} \subseteq \cdots \subseteq \mu^{(1)} \subseteq \mu^{(0)} = \lambda'$. Thus, from \eqref{Q'} we have
\begin{align*} Q'_{\lambda}(1,q,\ldots,q^{n-1};q^{n}) &= \sum_{\substack{s_{1}^{(j)} \geq \cdots \geq s_{m}^{(j)} \\ s_i^{(j)} \geq s_i^{(j+1)}}}
 \prod_{i=1}^{m}\prod_{a=1}^nq^{(a-1)(s_i^{(a-1)}-s_i^{(a)})}
q^{n {s_i^{(a-1)}-s_i^{(a)} \choose 2} } \\
&\qquad\qquad \times \frac{(q^n;q^n)_{s_i^{(a-1)}-s_{i+1}^{(a)}}}{(q^n;q^n)_{s_i^{(a-1)}-s_i^{(a)}} (q^n;q^n)_{s_i^{(a)} - s_{i+1}^{(a)}}}.
\end{align*}
Recall that we write $s^{(j)} = s_1^{(j)} +\ldots + s_{m}^{(j)}$.
The power of $q$ appearing in a term of the sum corresponding to an index set $s_i^{(j)}$ is given by
\begin{align*}
&\sum_{i=1}^m\sum_{a=1}^n\left[(a-1)(s_i^{(a-1)}-s_i^{(a)}) + n {s_i^{(a-1)} - s_i^{(a)} \choose 2}\right] \\
&\qquad =\sum_{i=1}^{m} \sum_{a=1}^n\left(a-1-\frac{n}{2}\right)(s_i^{(a-1)}-s_i^{(a)}) + \frac{n}{2}\sum_{i=1}^m\sum_{a=1}^n(s_i^{(a-1)}-s_i^{(a)})^2 \\
&\qquad =\sum_{a=1}^n\left(a-1-\frac{n}{2}\right)(s^{(a-1)}-s^{(a)}) + \frac{n}{2}\sum_{i=1}^m\sum_{a=1}^n(s_i^{(a-1)}-s_i^{(a)})^2 \\
&\qquad =-\frac{n}{2}s^{(0)} + s^{(1)} + \ldots + s^{(n-1)}+ \frac{n}{2}\sum_{i=1}^m\sum_{a=1}^n(s_i^{(a-1)}-s_i^{(a)})^2 \\
&\qquad = \A_{m,n}(s_*).
\end{align*}
In addition, this power of $q$ is multiplied the following product of Pochhammer symbols
\begin{align*}
&\prod_{i=1}^{m} \prod_{a=1}^n\frac{(q^n;q^n)_{s_i^{(a-1)}-s_{i+1}^{(a)}}}{(q^n;q^n)_{s_i^{(a-1)}-s_i^{(a)}} (q^n;q^n)_{s_i^{(a)} - s_{i+1}^{(a)}}}  = \B(s_*; q^n) \prod_{j=1}^m (q^n;q^n)_{s_j^{(0)}-s_{j+1}^{(0)}}.
\end{align*}
Thus, we have
\begin{align*}
Q_{\lambda}'(1,q,\ldots,q^{n-1};q^n)&=\sum_{\substack{s_{1}^{(j)} \geq \cdots \geq s_{m}^{(j)} \\s_i^{(j)} \geq s_i^{(j+1)}}}\prod_{j=1}^m (q^n;q^n)_{s_j^{(0)}-s_{j+1}^{(0)}} \B_{m,n}(s_*;q^{n})q^{\A_{m,n}(s_*)} \\
&=\prod_{j=1}^m (q^n;q^n)_{s_j^{(0)}-s_{j+1}^{(0)}} \sum_{\substack{s_{1}^{(j)} \geq \cdots \geq s_{m}^{(j)} \\s_i^{(j)} \geq s_i^{(j+1)}}} \B_{m,n}(s_*;q^{n})q^{\A_{m,n}(s_*)},
 \end{align*}
so using \eqref{PtoP'} and \eqref{QtoP}, we may write
\begin{align*}
P_{\lambda}(1,q,q^2,\ldots ;q^n) &= P_{\lambda}'(1,q,\ldots,q^{n-1};q^n)\\
&= \frac{Q_{\lambda}'(1,q,\ldots, q^{n-1};q^n)}{\prod_{j=1}^m(q^n;q^n)_{s_j^{(0)}-s_{j+1}^{(0)}}} \\
&= \! \! \! \! \! \sum_{\substack{s_{1}^{(j)} \geq \cdots \geq s_{m}^{(j)} \\s_i^{(j)} \geq s_i^{(j+1)}}} \!\! \! \! \! \B_{m,n}(r_*,s_*;q^{n})q^{\A_{m,n}(r_*,s_*)}. \qedhere
\end{align*}
\end{proof}

We also provide the following formula for Hall-Littlewood polynomials of even partitions.
\begin{Lemma} \label{evenlm}
Given a positive integer $m$ and a partition $\lambda$ with $\lambda_1 \leq m$, let $r_i = \lambda_i'$. Then for any positive integer $n$, we have
\[P_{2\lambda}(1,q,q^2,\ldots;q^n) = \sum_{\substack{s_1^{(j)}\geq \cdots \geq s_{2m}^{(j)} \\ r_{\lceil i/2 \rceil} \geq s_i^{(1)}, s_i^{(j)} \geq s_i^{(j+1)}}} \D_{m,n}(r_*,s_*;q^n) q^{\C_{m,n}(r_*,s_*)},\]
where the sum ranges over sets of decreasing integers $s_i^{(j)}$ for $1 \leq j \leq n-1$ and $\C_{m,n}(r_*,s_*)$ and $\D_{m,n}(r_*,s_*;q)$ are defined in \eqref{Cdef} and \eqref{Ddef}.
\end{Lemma}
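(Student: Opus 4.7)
The proof will essentially be a one-line application of Lemma \ref{key} to the doubled partition $2\lambda$, once the boundary values are identified correctly; the lemma is really just packaged so that the notation $\C_{m,n}, \D_{m,n}$ matches up with $\A_{2m,n}, \B_{2m,n}$ in the right way. The plan is as follows. Since $\lambda_1 \leq m$, the partition $2\lambda$ satisfies $(2\lambda)_1 \leq 2m$, so Lemma \ref{key} applies to $2\lambda$ with $m$ replaced by $2m$. This yields
\[
P_{2\lambda}(1,q,q^2,\ldots;q^n) = \sum_{\substack{s_1^{(j)} \geq \cdots \geq s_{2m}^{(j)} \\ s_i^{(j)} \geq s_i^{(j+1)}}} \B_{2m,n}(s_*;q^n)\, q^{\A_{2m,n}(s_*)},
\]
where by the convention in Lemma \ref{key} we have $s_i^{(0)} = (2\lambda)'_i$ (fixed), $s_i^{(n)} = 0$, and $s_{2m+1}^{(j)} = 0$, and where the sum is over the remaining free indices $s_i^{(j)}$ for $1 \leq j \leq n-1$.

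The key book-keeping step is to identify the boundary partition. I would argue that for every $i$,
\[
(2\lambda)'_i = \#\{j : 2\lambda_j \geq i\} = \#\{j : \lambda_j \geq \lceil i/2 \rceil\} = \lambda'_{\lceil i/2 \rceil} = r_{\lceil i/2 \rceil},
\]
which one sees immediately by noting that doubling the parts of $\lambda$ corresponds to duplicating each column of the Ferrers--Young diagram of $\lambda'$. Consequently the constraint $s_i^{(0)} \geq s_i^{(1)}$ becomes $r_{\lceil i/2 \rceil} \geq s_i^{(1)}$, matching exactly the condition in the statement of the lemma.

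At this point the proof is complete by the very definitions \eqref{Cdef} and \eqref{Ddef}: specializing $s_i^{(0)} = r_{\lceil i/2 \rceil}$ gives $\A_{2m,n}(s_*) = \C_{m,n}(r_*,s_*)$ and $\B_{2m,n}(s_*;q^n) = \D_{m,n}(r_*,s_*;q^n)$, so the sum above becomes the right-hand side of the lemma. There is no real obstacle here; the only content is checking the conjugate-partition identity $(2\lambda)'_i = r_{\lceil i/2\rceil}$, after which the result is immediate from Lemma \ref{key} and the fact that $\C, \D$ were set up precisely to absorb the specialization $s_i^{(0)} = r_{\lceil i/2 \rceil}$.
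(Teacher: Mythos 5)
Your proposal is correct and follows the same route as the paper: the paper's proof is exactly the one-line application of Lemma \ref{key} to $2\lambda$ together with the definitions \eqref{Cdef} and \eqref{Ddef}. Your explicit verification that $(2\lambda)'_i = r_{\lceil i/2\rceil}$ is the only book-keeping the paper leaves implicit, and it is done correctly.
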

\begin{proof}
Applying the previous lemma to the partition $2\lambda$ and recalling the definitions of $\C_{m,n}(r_*,s_*)$ and $\D_{m,n}(r_*,s_*;q)$ results directly in this expression.
\end{proof}

\section{Proof of Theorems}
The proofs of Theorems \ref{th1}--\ref{th3} follow immediately from Lemma \ref{evenlm} and the respectively labeled theorems of \cite{GOW}. A sum over all partitions $\lambda$ with $\lambda_1 \leq m$ is the same as a sum over all partitions whose conjugates have length $l(\lambda') \leq m$. We may represent these partitions by their conjugates, which are specified by indices $r_1 \geq \cdots \geq r_m \geq 0$. This shows in each case that
\begin{align*}
\sum_{r_*,s_*}\D_{m, n'}(r_*,s_*;q^{n'})q^{\C_{m, n'}(r_*,s_*)+ar} &= 
\sum_{\lambda : \lambda_1 \leq m} q^{a|\lambda|}P_{2\lambda}(1,q,q^2,\ldots ; q^{n'}) \\
&= \text{``Infinite product modular function"}
\end{align*}
for $a=1,2$.

The proofs of Theorems 1.4--1.7 follow from Theorems 5.10--5.12, 5.14 of \cite{RW} respectively by rewriting the sums that appear there in a similar way. In this case, we represent the sum over all partitions $\lambda$ with $\lambda_1 \leq m$ as a sum over their conjugates, which we specify by $s_1^{(0)} \geq \cdots \geq s_m^{(0)}$, and use Lemma \ref{key} to rewrite the Hall-Littlewood polynomials. We note that with this notation we have $|\lambda| = s_1^{(0)} + \ldots + s_m^{(0)} = s^{(0)}$
and $m_i(\lambda) = s_{i}^{(0)} - s_{i+1}^{(0)}$. In addition, the number of odd partitions of $\lambda$, written as $\text{odd}(\lambda)$ in Theorem 5.14 of \cite{RW}, is equal to
\[\sum_{i \ \text{odd}} m_i(\lambda) = \sum_{i \ \text{odd}} s_{i}^{(0)} - s_{i+1}^{(0)} = s_1 - s_2 + s_3 - \ldots \pm s_m = \text{alt}(s_*^{(0)}).\]

%\section*{Competing Interests}
%The author herby declares that she has no competing interests.

\end{document}